\newtheorem{theorem}{Theorem}[section]
\newtheorem{proposition}[theorem]{Proposition}
\newtheorem{lemma}[theorem]{Lemma}
\newtheorem{corollary}[theorem]{Corollary}
\theoremstyle{definition}
\newtheorem{remark}[theorem]{Remark}
\title[Infinitely many small exotic Stein fillings]{Infinitely many small exotic Stein fillings}
\author[Akbulut and Yasui]{Selman Akbulut and Kouichi Yasui}
\thanks{The first named author is partially supported by NSF grants DMS1065879 and DMS0905917. The second named author was partially supported by JSPS KAKENHI Grant Number 23840027.}
\date{September 12, 2012. Revised: April 8, 2013.}
\subjclass[2010]{Primary~57R55, Secondary~57R65, 57R17}
\keywords{4-manifold; smooth structure; Stein manifold; 3-manifold}
\address{Department~of~Mathematics, Michigan State University, E.Lansing, MI, 48824, USA}
\email{akbulut@math.msu.edu}
\address{Department~of~Mathematics, Graduate School~of~Science, Hiroshima~University, 1-3-1 Kagamiyama, Higashi-Hiroshima, 739-8526, Japan}
\email{kyasui@hiroshima-u.ac.jp}
\begin{document}

\begin{abstract}We show that there exist infinitely many simply connected compact Stein 4-manifolds with $b_2=2$ such that they are all homeomorhic but mutually non-diffeomorphic,  and they are Stein fillings of the same contact 3-manifold on their boundaries. We also describe their handlebody pictures.
\end{abstract}

\maketitle

\section{Introduction}\label{sec:intro}
Stein 4-manifolds enjoy some rigidness properties which are useful in studying their topology. For example, the Stein manifolds, which certain closed 3-manifolds bound, are unique up to diffeomorphisms  $($cf.\ \cite{E2}, \cite{We}$)$. Hence identifying exotic (i.e.\ all homeomorphic but mutually non-diffeomorphic) Stein 4-manifolds is a particularly interesting problem. In ~\cite{AEMS} by using Lefschetz fibrations on knot surgered elliptic surfaces, Akhmedov-Etnyre-Mark-Smith constructed infinitely many exotic simply connected compact Stein fillings of Seifert fibered contact 3-manifolds. Recently Akhmedov-Ozbagci~\cite{AkhOz} generalized this example to a larger family of Seifert fibered contact 3-manifolds. These  4-manifolds have large second Betti numbers, so finding small concrete examples of such manifolds and describing their handlebody pictures is  a natural question in 4-manifold topology. In \cite{AY5}, generalization of the cork twisting constructons of \cite{AY2},  the authors constructed arbitrarily many exotic compact Stein $4$-manifolds which have the same topological invariants  of a given 2-handlebody $X$ with $b_2(X)\geq 1$,  though we do not know whether these exotic Stein manifolds induce the same contact $3$-manifolds on their boundaries.\smallskip


Here we construct infinitely many exotic Stein fillings of a fixed contact $3$-manifold with small second Betti number. We also give their Stein handlebody pictures. 
\begin{theorem}\label{sec:intro:thm:stein}
There exist infinitely many simply connected compact Stein 4-manifolds with $b_2=2$ such that they are all homeomorphic but mutually non-diffeomorphic, furthermore they are Stein fillings of the same contact 3-manifold. \end{theorem}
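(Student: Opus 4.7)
The plan is to exhibit an explicit simply connected Stein 2-handlebody $X$ with $b_2=2$, and then produce an infinite family $\{X_n\}_{n\geq 1}$ from $X$ by a localized surgery that (i) preserves the boundary 3-manifold together with its contact structure $\xi$, (ii) preserves the intersection form and simple connectivity, and (iii) changes the smooth structure of the interior. By Freedman's theorem, (ii) forces all $X_n$ to be mutually homeomorphic; by the Eliashberg correspondence, (i) forces each $X_n$ to be a Stein filling of the same contact 3-manifold $(\partial X, \xi)$; and (iii) is what must be detected by a gauge-theoretic invariant.

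First, I would draw a small Stein handle diagram, likely a pair of Legendrian 2-handles (possibly on top of a single 1-handle) attached with framings $\mathrm{tb}-1$, arranged so that $X$ carries a small positive allowable Lefschetz fibration (PALF) containing an embedded cusp neighborhood --- a regular neighborhood $T^2\times D^2$ of a smooth torus fiber together with a vanishing cycle in $X$. The Eliashberg--Loi--Piergallini correspondence then packages $X$ as a Stein filling of $(\partial X,\xi)$.

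Second, for each knot $K\subset S^3$, I form $X_K$ by Fintushel--Stern knot surgery on the embedded cusp torus. Because the surgery is supported in the interior region $T^2\times D^2$, the boundary 3-manifold $\partial X$ is unchanged as a smooth manifold. A compatible Stein structure on $X_K$ inducing the same $\xi$ on $\partial X$ is produced by replacing the surgered torus neighborhood with the canonical PALF on $(S^3\setminus\nu K)\times D^2$ associated to a fibered $K$, glued via the monodromy identification; this reassembles $X_K$ as a PALF extending the original fibration, hence as a Stein filling of the same $(\partial X,\xi)$. All $X_K$ share the same $H_2$ and trivial $\pi_1$, so Freedman makes them mutually homeomorphic.

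Third, to distinguish them smoothly, I would embed each $X_K$ into a closed symplectic 4-manifold $Z_K$ by capping off $\partial X$ with a fixed piece $W$ chosen so that $Z_K$ is (after possibly blowing up) a known symplectic manifold with non-trivial Seiberg--Witten invariants. The Fintushel--Stern formula then computes $SW_{Z_K}$ in terms of $\Delta_K(t)$, so a sequence $\{K_n\}$ of fibered knots with pairwise distinct Alexander polynomials (e.g.\ twist knots) yields infinitely many non-diffeomorphic $Z_{K_n}$, and therefore infinitely many pairwise non-diffeomorphic $X_{K_n}$. The main obstacle is the sharpness of $b_2=2$: there is essentially no room to spare, so embedding a cusp neighborhood while maintaining Stein-ness, and then arranging a symplectic cap that still computes the SW invariants correctly, both require an explicit and delicate handle calculus. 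This is the technical heart of the proof.
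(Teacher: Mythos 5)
Your outline is essentially the Akhmedov--Etnyre--Mark--Smith strategy (knot surgery on a fiber torus, repackaged as a PALF), which the paper cites precisely as the construction that produces exotic Stein fillings with \emph{large} $b_2$; the entire difficulty of the present theorem is getting $b_2=2$, and your proposal defers exactly that to ``the technical heart'' without resolving it. The paper takes a different and more economical route: a single $p$-log transform on an explicit $T^2\times D^2\subset X$ drawn by hand, for which Stein-ness of each $X_p$ is immediate from Eliashberg's criterion applied to an explicit Legendrian diagram. Beyond this, there are two genuine gaps. First, your method of distinguishing the $X_K$ smoothly --- capping off with a fixed $W$ and comparing $SW_{Z_K}$ via the Fintushel--Stern formula --- does not by itself distinguish the compact pieces: a diffeomorphism $X_{K_m}\to X_{K_n}$ restricts to a self-diffeomorphism of $\partial X$ that need not extend over $W$, so you cannot conclude $Z_{K_m}\cong Z_{K_n}$. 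One needs an invariant intrinsic to the manifold with boundary. The paper uses the minimal genus of a distinguished class $S_p$ (pinned down algebraically by the intersection form, so that any diffeomorphism must carry $S_{2q-1}$ to $\pm S_{2p-1}$), bounded below by the adjunction inequality for Stein domains, which holds by the Lisca--Mati\'c embedding into a minimal surface of general type --- no explicit cap is ever constructed.

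Second, the claim that the surgered manifold carries a Stein structure inducing \emph{the same} contact structure on $\partial X$ is asserted but not proved; this is the other crux of the theorem. (Note also that knot surgery glues in $(S^3\setminus\nu K)\times S^1$, not $(S^3\setminus\nu K)\times D^2$, and extending a PALF over this piece forces the fiber genus, hence $b_2$, to grow with the genus of $K$ --- which is why AEMS get large examples.) The paper offers two fixes, neither of which appears in your proposal: (a) a soft pigeonhole argument using the Colin--Giroux--Honda/Gay finiteness of strongly fillable contact structures on a fixed closed $3$-manifold, so that infinitely many of the $X_p$ automatically fill the same contact structure; or (b) a direct argument, realizing the log transform as regluing $T^2\times D^2$ by a diffeomorphism $f_p$ of $T^3$ that stabilizes $\mathbb{Z}\oplus\mathbb{Z}\oplus 0\subset H_1(T^3)$, hence by Eliashberg--Polterovich is isotopic to a contactomorphism of the unique Stein fillable contact structure on $T^3$; the boundary contact manifolds, obtained by contact $(-1)$-surgery on $L$ and on $f_p(L)$, are then contactomorphic. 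Without one of these ingredients (or a worked-out PALF argument), the ``same contact $3$-manifold'' clause of the theorem is not established.
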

The construction is inspired by a recent paper \cite{A6} of the first author, where multiple log transforms produced infinitely many Stein fillings which are mutually exotic rel boundary. Here instead of multiple log transforms, we use single log transforms for the construction. This result should be contrasted with the result of \cite{Y5}, which says that any $p$-log transform $(p>1)$ along a homologically non-torsion $c$-embedded torus never produces a compact 4-manifold which admits a Stein structure.\vspace{0.5\baselineskip}\\
\textbf{Acknowledgements.} We would like to thank Chris Wendl for a helpful comment on contact structures.  This work was partially done during the workshop ``Invariants in Low-Dimensional Topology and Knot Theory'' at the Mathematisches Forschungsinstitut Oberwolfach in June 2012. We thank MFO staff and the organizers for the wonderful environment. We also thank the referee for his/her useful comments. 
\section{Construction}
Let $X$ be the compact smooth 4-manifold given by the left handlebody picture in Figure~\ref{fig1}. Note that $X$ contains an obvious $T^2 \times D^2$. Then by performing $X$ the $p$-log transform operation $(p\geq 1)$ in its interior 
we get $X_{p}$, which is the right picture. Here we used the description of $p$-log transform operation given in Section 4 of \cite{AY1}.  

\begin{figure}[ht!]
\begin{center}
\includegraphics[width=3.5in]{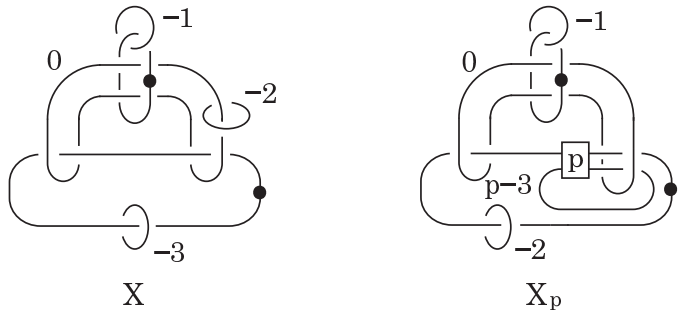}
\caption{$X$ and $X_{p}$}
\label{fig1}
\end{center}
\end{figure}
Note that $X$ and $X_{p}$ are simply connected and that their intersection forms are given by the matrices
\begin{equation*}
\left(
\begin{array}{cc}
0 & 1 \\
1 & -2
\end{array}
\right)\quad \textnormal{and}\quad 
\left(
\begin{array}{cc}
0 & 1 \\
1 & -2p^2+p-3
\end{array}
\right),
\end{equation*}
respectively. The intersection forms of $X$ and $X_{p}$ are thus unimodular and indefinite, which shows that $\partial X$ is a homology 3-sphere. It is easy to see that the form of $X_{p}$ is even if and only if $p$ is odd. Therefore, Freedman's theorem \cite{Fr} (cf.\ \cite{B}) together with the classification of the unimodular indefinite forms tells the following. 
\begin{lemma}\label{homeo} $(1)$ $X_{p}$ and $X_{q}$ are homeomorphic to each other if and only if the parities of $p$ and $q$ coincide.\smallskip\\
$(2)$ $X_{p}$ is homeomorphic to $X$ if and only if $p$ is a positive odd integer. 
\end{lemma}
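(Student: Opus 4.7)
My plan is to apply Freedman's theorem~\cite{Fr} in its bounded form, as formulated by Boyer~\cite{B}: two simply connected compact topological 4-manifolds sharing a common integral homology 3-sphere boundary and having equal Kirby--Siebenmann invariants are homeomorphic if and only if their intersection forms are isomorphic over $\mathbb{Z}$. The first step is to verify the hypotheses. Since the $p$-log transform is performed in the interior of $X$, we have $\partial X_{p}=\partial X$ as oriented smooth 3-manifolds for every $p$, and the excerpt has already observed this boundary is a homology 3-sphere. Each $X_{p}$ carries a Stein (hence smooth) structure, so its Kirby--Siebenmann invariant vanishes. Therefore, among the $X_{p}$, the homeomorphism type is controlled entirely by the intersection form.

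The second step is to classify the relevant forms. Among rank-$2$ unimodular indefinite symmetric bilinear forms over $\mathbb{Z}$, there are exactly two isomorphism classes: the even hyperbolic form $H$ and the odd form $\langle 1\rangle\oplus\langle -1\rangle$. A matrix of the shape $\bigl(\begin{smallmatrix}0 & 1\\ 1 & c\end{smallmatrix}\bigr)$ has determinant $-1$, so it is automatically unimodular and indefinite; an elementary change of basis shows it is isomorphic to $H$ when $c$ is even and to $\langle 1\rangle\oplus\langle -1\rangle$ when $c$ is odd. A parity check confirms $-2p^{2}+p-3\equiv p+1\pmod{2}$, as the excerpt already asserts, so the form of $X_{p}$ is even if and only if $p$ is odd.

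Combining these two steps closes the argument. For (1), $X_{p}$ is homeomorphic to $X_{q}$ iff their intersection forms are isomorphic iff $p\equiv q\pmod{2}$; for (2), the form of $X$ itself is the even form $H$ (take $c=-2$), so $X\cong X_{p}$ iff $X_{p}$ carries an even form, iff $p$ is odd, which under the standing assumption $p\geq 1$ gives precisely the positive odd integers. I do not foresee a real obstacle: the only delicate point is the bounded version of Freedman's theorem, which needs a homology-sphere boundary and a vanishing Kirby--Siebenmann invariant, hence the parenthetical reference to~\cite{B}; everything else is linear algebra over $\mathbb{Z}$.
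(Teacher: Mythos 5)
Your proposal is correct and follows essentially the same route as the paper, which likewise deduces the lemma from Freedman's theorem in Boyer's bounded form together with the classification of indefinite unimodular forms and the parity computation $-2p^{2}+p-3\equiv p+1\pmod{2}$. Your write-up simply makes explicit the details (homology-sphere boundary, vanishing Kirby--Siebenmann invariant, the two isomorphism classes of rank-2 indefinite unimodular forms) that the paper leaves to the reader.
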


Canceling the upper 1- and 2-handle pairs and converting the 1-handle notations, we get the Stein handlebody pictures of $X$ and $X_{p}$ in Figure~\ref{fig2} (For the time being, ignore symbols $\alpha, \beta, \gamma, \dots$.). It  follows from Eliashberg's theorem~\cite{E1}  that $X$ and $X_{p}$ $(p\geq 1)$ admit Stein structures. In the rest of the paper, we equip $X$ and $X_{p}$ with Stein structures given by these Legendrian pictures. 
\begin{figure}[ht!]
\begin{center}
\includegraphics[width=4.1in]{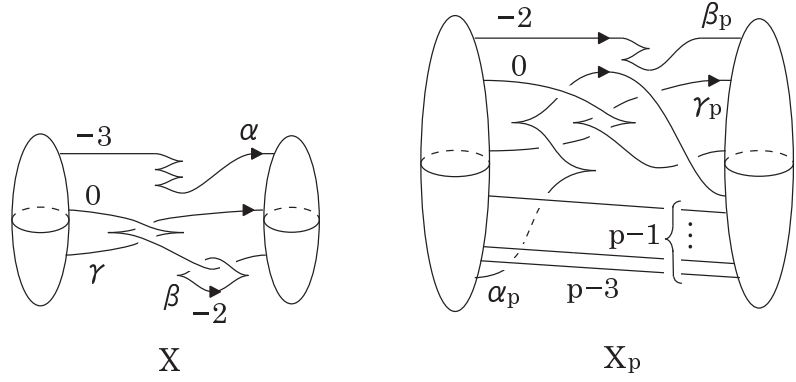}
\caption{Stein handlebodies of $X$ and $X_{p}$}
\label{fig2}
\end{center}
\end{figure}
\section{Detecting smooth structures}
Next we detect smooth structures by applying the adjunction inequality. The argument is a simplification of the genus arguments of \cite{AY4} and \cite{Y5}. 

Let $\alpha, \beta, \gamma$ and $\alpha_p, \beta_p, \gamma_p$ be the oriented attaching circles  (framed links) of the 2-handles of $X$ and $X_{p}$ as indicated in Figure \ref{fig2}. The rotation numbers of these circles are as follows. 
\begin{equation*}
r(\alpha)=2, \quad r(\beta)=0, \quad r(\gamma)=0,\quad r(\alpha_p)=-1, \quad r(\beta_p)=1, \quad r(\gamma_p)=0.
\end{equation*}

Define the basis $S,T$ of $H_2(X;\mathbb{Z})$ and the basis $R_p, T_p$ of $H_2(X_{p};\mathbb{Z})$ given by 2-handles as follows. 
\begin{equation*}
S=[\beta],\quad T=[\gamma],\quad R_p=[\alpha_p-p\beta_p], \quad T_p=[\gamma_p].
\end{equation*}
Furthermore define the class $S_p\in H_2(X_{p};\mathbb{Z})$ as follows. 
\begin{align*}
S_p&=\left\{
\begin{array}{ll}
R_{2q-1}+((2q-1)^2-q+1)T_{2q-1}, &\textnormal{if $p=2q-1$ for some integer $q$};\\
R_{2q}+((2q)^2-q+1)T_{2q}, &\textnormal{if $p=2q$ for some integer $q$}.
\end{array}
\right.
\end{align*}
Note that $T,S$ and $T_p, S_{p}$ are bases of $H_2(X;\mathbb{Z})$ and $H_2(X_{p};\mathbb{Z})$, respectively. The intersection matrices of $X$, $X_{2q-1}$ and $X_{2q}$ with respect to these bases are
\begin{equation*}
\left(
\begin{array}{cc}
0 & 1 \\
1 & -2
\end{array}
\right),
\quad 
\left(
\begin{array}{cc}
0 & 1 \\
1 & -2
\end{array}
\right)
\quad
\textnormal{and}\quad 
\left(
\begin{array}{cc}
0 & 1 \\
1 & -1
\end{array}
\right),
\end{equation*}
respectively. The following algebraic lemma is easily checked, using these bases. 
\begin{lemma}\label{basis restriction}Suppose $v\in H_2(X_{p};\mathbb{Z})$ satisfies $v\cdot v=\left\{
\begin{array}{ll}
-2, & \text{if $p$ is odd}, \\
-1, & \text{if $p$ is even}.
\end{array}
\right.$ 
Then $v=\pm S_p$. 
\end{lemma}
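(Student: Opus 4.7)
The plan is a direct expansion in the basis $\{T_p, S_p\}$ of $H_2(X_p;\mathbb{Z})$ displayed just above the lemma, followed by an elementary analysis of the resulting Diophantine equation. I would write an arbitrary class as $v = aT_p + bS_p$ with $a,b\in\mathbb{Z}$, and use the intersection data already recorded in the paper — namely $T_p\cdot T_p = 0$, $T_p\cdot S_p = 1$, and $S_p\cdot S_p = -2$ or $-1$ according to whether $p$ is odd or even — to expand
\[
v\cdot v \;=\; 2ab \;+\; (S_p\cdot S_p)\, b^2.
\]

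Substituting the hypothesis then turns the self-intersection condition into $b(a-b) = -1$ in the odd case and $b(b-2a) = 1$ in the even case. Each is a factorization of $\pm 1$ into two integers, so only four possibilities need to be examined per case; a short inspection shows that the only solutions in either case are $(a,b) = (0,\pm 1)$, which reads $v = \pm S_p$.

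There is no real obstacle in the argument itself — it is a two-line Diophantine check — but the one point worth being careful about is that I am relying on the intersection matrix displayed just above the lemma. That matrix was derived from the case-by-case definition of $S_p$ in terms of $R_p$ and $T_p$, together with the log-transform intersection computations from Section~4 of \cite{AY1}. Once those entries are trusted, the lemma follows immediately by the expansion and case analysis above, with no further input from the topology of $X_p$.
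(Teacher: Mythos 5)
Your proof is correct and is exactly the argument the paper intends: the paper only remarks that the lemma is ``easily checked, using these bases,'' and your expansion $v=aT_p+bS_p$ with the displayed intersection matrix, reducing to $b(a-b)=-1$ (odd $p$) or $b(b-2a)=1$ (even $p$), is that check carried out. The arithmetic and the conclusion $(a,b)=(0,\pm 1)$ are both right.
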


Let $g_{-1}$ (resp.\ $g_p$ $(p\geq 1)$) be the minimal number of genera of smoothly embedded closed surfaces which represent the class $S$ in $H_2(X;\mathbb{Z})$ (resp.\ $S_p$ in $H_2(X_{p};\mathbb{Z})$). 
\begin{lemma}\label{genus restriction}$(1)$ Let $p$ be a non-negative integer. Then $g_{2q-1}> g_{2p-1}$, for any integer $q>g_{2p-1}$. \smallskip\\
$(2)$ Let $p$ be a positive integer. Then $g_{2q}> g_{2p}$, for any integer $q>g_{2p}$. 
\end{lemma}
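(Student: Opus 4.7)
The plan is to apply the Stein adjunction inequality (Akbulut--Matveyev, Lisca--Mati\'c) directly to the class $S_p \in H_2(X_p;\mathbb{Z})$ equipped with the Stein structure $J_p$ coming from the Legendrian picture in Figure \ref{fig2}. Because the Chern number $\langle c_1(J_p), S_p\rangle$ turns out to grow linearly with $p$, adjunction will force $g_p$ itself to grow with $p$, and both claimed inequalities will drop out at once, without ever having to compare representatives in different $X_p$'s directly.

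The first step is to compute $\langle c_1(J_p), S_p\rangle$ from the rotation numbers recorded above. For any 2-cycle $v$ in a Stein handlebody, $\langle c_1(J_p), v\rangle$ is obtained by extending the assignment $K_i\mapsto r(K_i)$ linearly over the 2-handles. Applied to $R_p = \alpha_p - p\beta_p$ and $T_p = \gamma_p$, the listed rotation numbers give
\[
\langle c_1(J_p), R_p\rangle = r(\alpha_p) - p\,r(\beta_p) = -1 - p, \qquad \langle c_1(J_p), T_p\rangle = r(\gamma_p) = 0,
\]
so that $\langle c_1(J_p), S_p\rangle = -(p+1)$ independently of the coefficient of $T_p$ in $S_p$.

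Now the Stein adjunction inequality applied to a smoothly embedded closed oriented genus-$g$ representative of $S_p$ reads
\[
2g - 2 \;\geq\; S_p\cdot S_p + |\langle c_1(J_p), S_p\rangle|.
\]
Combined with $S_p\cdot S_p = -2$ when $p$ is odd and $S_p\cdot S_p = -1$ when $p$ is even (read off from the intersection matrices displayed just before Lemma \ref{basis restriction}), this yields the bounds $g_{2q-1}\geq q$ and $g_{2q}\geq q+1$. Both parts of the lemma then follow immediately: if $q > g_{2p-1}$ then $g_{2q-1}\geq q > g_{2p-1}$, and if $q > g_{2p}$ then $g_{2q}\geq q+1 > g_{2p}$.

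There is no genuine obstacle here; the only points to be careful about are (i) invoking a form of the Stein adjunction inequality that allows classes of negative self-intersection (which the Akbulut--Matveyev / Lisca--Mati\'c statement does), and (ii) checking that $S_p$ is essential so the inequality applies, which is automatic since $S_p$ is a basis element of $H_2(X_p;\mathbb{Z})$.
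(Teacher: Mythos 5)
Your proposal is correct and is essentially the paper's own argument: apply the Stein adjunction inequality (via Lisca--Mati\'c embedding into a minimal surface of general type, then Kronheimer--Mrowka/Ozsv\'ath--Szab\'o) to the class $S_p$, computing $\langle c_1, S_p\rangle = -(p+1)$ from the rotation numbers and $S_p\cdot S_p$ from the intersection matrix, which gives $g_{2q-1}\geq q$ and $g_{2q}\geq q+1$ and hence both inequalities. The only difference is that you spell out the $c_1$ computation that the paper leaves implicit.
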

\begin{proof}$(1)$ Fix a non-negative integer $p$. Assume $q>g_{2p-1}$. 
As pointed out in \cite{AM}, the adjunction inequality holds for Stein 4-maifolds. The reason is as follows. By \cite{LM1}, we can holomorphically embed a Stein 4-manifold into a minimal closed complex surface of general type with $b_2^+>1$. Thus the adjunction inequality for closed 4-manifolds in \cite{KM1} and \cite{OzSz} gives the inequality for Stein manifolds. Therefore, we get the inequality 
 \begin{equation*}
2g_{2q-1}-2\geq |-1-(2q-1)|\, -2. 
\end{equation*}
The assumption of $q$ thus shows $g_{2q-1}> g_{2p-1}$. The case $(2)$ is similar. 
\end{proof}

\begin{corollary}\label{cor:exotic}Let $p$ be a positive integer. Then the following hold. \smallskip\\
$(1)$ $X_{2q-1}$ is homeomorphic but not diffeomorphic to $X_{2p-1}$ or $X$, for any $q> g_{2p-1}$.\smallskip\\
$(2)$ $X_{2q}$ is homeomorphic but not diffeomorphic to $X_{2p}$ for any $q> g_{2p}$. 
\end{corollary}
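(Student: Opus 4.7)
The plan is to synthesize the three preceding lemmas: Lemma~\ref{homeo} supplies the homeomorphisms, while Lemmas~\ref{basis restriction} and \ref{genus restriction} together obstruct diffeomorphism by showing that the minimal genus of a canonically characterized homology class must be a smooth invariant of the manifold.

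First, for the homeomorphism assertions in (1), I would apply Lemma~\ref{homeo}(1) to $X_{2q-1}$ and $X_{2p-1}$ (both indices odd) and Lemma~\ref{homeo}(2) to $X_{2q-1}$ and $X$ (since $q\geq 1$ forces $2q-1$ to be a positive odd integer). Part (2) of the corollary is handled the same way by Lemma~\ref{homeo}(1) applied to two even indices.

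For non-diffeomorphism I would argue by contradiction. Suppose $\phi \colon X_{2q-1}\to X_{2p-1}$ is a diffeomorphism. Since $\phi_{\ast}$ is an isometry of intersection forms, $\phi_{\ast}(S_{2q-1})$ is a class with self-intersection $-2$, and Lemma~\ref{basis restriction} then forces $\phi_{\ast}(S_{2q-1})=\pm S_{2p-1}$. Reversing the orientation of a representing surface shows that the minimal genus of a class agrees with that of its negative, and hence $g_{2q-1}=g_{2p-1}$, contradicting Lemma~\ref{genus restriction}(1) under the hypothesis $q>g_{2p-1}$. The same argument compares $X_{2q-1}$ with $X$: the class $S\in H_{2}(X;\mathbb{Z})$ is, up to sign, the unique $-2$-class (immediate from the $2\times 2$ intersection matrix), and I would take $p=0$ in Lemma~\ref{genus restriction}(1) so that $g_{-1}$ plays the role of $g_{2p-1}$. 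Part (2) of the corollary is the verbatim analogue with $-1$ replacing $-2$ and Lemma~\ref{genus restriction}(2) replacing (1).

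The only point one must be careful about is that the minimal genus truly descends to an invariant of the diffeomorphism type of the pair $(X_{p},\pm S_{p})$, and this is precisely what Lemma~\ref{basis restriction} secures. Once the two preparatory lemmas are in hand the corollary is a short synthesis; the substantive obstacle lies upstream, in establishing Lemma~\ref{genus restriction} via the adjunction inequality for Stein surfaces (holomorphically embedding them into minimal general-type surfaces with $b_{2}^{+}>1$ and invoking Kronheimer--Mrowka / Ozsv\'ath--Szab\'o).
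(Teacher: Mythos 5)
Your proposal is correct and follows essentially the same route as the paper: Lemma~\ref{homeo} for the homeomorphisms, then a contradiction argument in which Lemma~\ref{basis restriction} forces $f_{*}(S_{2q-1})=\pm S_{2p-1}$, so the minimal genera agree, contradicting Lemma~\ref{genus restriction} (with the $X$ case handled via $g_{-1}$, i.e.\ the $p=0$ instance). Your added remarks on orientation reversal and on $S$ being the unique $(-2)$-class up to sign in $H_{2}(X;\mathbb{Z})$ are details the paper leaves implicit but are exactly the right justifications.
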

\begin{proof}$(1)$ Assume $q> g_{2p-1}$, and there exists a diffeomorphism $f: X_{2q-1}\to X_{2p-1}$. Lemma~\ref{basis restriction} shows $f_*(S_{2q-1})=\pm S_{2p-1}$. We thus have $g_{2q-1}= g_{2p-1}$, which contradicts Lemma~\ref{genus restriction}. The claim thus follows from Lemma~\ref{homeo}. The same argument holds for the $X$ case. The (2) case is similar. 
\end{proof}
To prove the main theorem, we use the following fact, which was kindly pointed to us by Wendl. 
\begin{lemma}[Wendl~\cite{We2}]\label{lem:Wendl}For any closed connected oriented 3-manifold, it has at most finitely many different strongly fillable contact strctures up to isomorphisms. 
\end{lemma}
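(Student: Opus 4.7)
\medskip

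\noindent\textbf{Proof plan.} I would split the finiteness into two separate statements: (i) only finitely many homotopy classes of oriented 2-plane fields on $M$ support a strong filling, and (ii) within each such homotopy class only finitely many tight contact structures arise. Since every strongly fillable contact structure is tight (Gromov--Eliashberg), combining (i) and (ii) yields the result.

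For (i), I would use that homotopy classes of oriented 2-plane fields on a closed oriented 3-manifold $M$ are classified by the Euler class $e(\xi)\in H^2(M;\mathbb{Z})$ together with a $\mathbb{Z}$-valued invariant $d_3(\xi)$. The adjunction inequality for tight contact structures bounds the dual Thurston norm of $e(\xi)$, leaving only finitely many choices in $H^2(M;\mathbb{Z})$. Given a strong filling $(W,\omega)$ of $(M,\xi)$, Gompf's formula
$d_3(\xi)=\tfrac{1}{4}\bigl(c_1(W,\omega)^2-2\chi(W)-3\sigma(W)\bigr)$
(relative to the canonical reference) expresses $d_3(\xi)$ in terms of the topology of the filling. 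After reducing to a minimal symplectic filling and invoking bounds on the topology of minimal fillings in the spirit of Stipsicz and Ozsv\'ath--Szab\'o (coming from Seiberg--Witten or Heegaard Floer basic class estimates on the closed symplectic manifold obtained by capping $W$ off via an Eliashberg--Etnyre cap), one bounds $\chi(W)$, $\sigma(W)$, and $c_1(W,\omega)^2$, and hence $d_3(\xi)$. So only finitely many homotopy classes admit a strong filling.

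For (ii), I would invoke the theorem of Gay, and of Ghiggini--Honda, that a strongly fillable contact structure has vanishing Giroux torsion along every incompressible torus in $M$. Cutting $M$ along its JSJ decomposition, the Colin--Giroux--Honda finiteness theorem gives finitely many tight contact structures on each atoroidal piece (with fixed convex torus boundary and fixed dividing set), and the analogous relative classifications for Seifert-fibered pieces (Honda, Giroux, Ghiggini) give similar finiteness there; once Giroux torsion is killed, there are only finitely many admissible gluing parameters. Assembling these yields finitely many tight, Giroux-torsion-free contact structures in each homotopy class.

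The main obstacle is the toroidal case in (ii): a generic toroidal 3-manifold supports infinitely many tight contact structures, and it is precisely the Gay/Ghiggini--Honda vanishing of Giroux torsion --- proved via pseudoholomorphic curves in strong symplectic cobordisms --- that tames them. My plan therefore rests on importing these external results and organizing the cut-and-paste combinatorics to bound the gluing parameters; the genuinely hard step is the input from holomorphic curve theory, whereas the assembly of (i) and (ii) is essentially bookkeeping.
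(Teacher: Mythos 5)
Your plan correctly identifies the two external inputs that make this lemma true --- the Colin--Giroux--Honda finiteness theorem and the vanishing of Giroux torsion for strongly fillable structures (Gay) --- and these are exactly what the paper cites. But the paper's proof is two lines, because Theorem 0.6 of Colin--Giroux--Honda is already stated at the level needed: for each fixed value of the Giroux torsion, a closed connected 3-manifold carries only finitely many tight contact structures up to contactomorphism, with no stratification by homotopy class of plane fields required. Combined with Gay's Corollary 3 (positive Giroux torsion obstructs strong fillability) and the fact that fillable implies tight, the lemma follows immediately. Your step (ii) --- cutting along the JSJ decomposition, classifying on the pieces, and controlling gluing parameters once torsion is killed --- is essentially a sketch of the \emph{proof} of the CGH theorem rather than an application of it; it is not wrong, but it re-derives a major theorem that should simply be cited.

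The genuine gap is in your step (i). Gompf's formula expresses $d_3(\xi)$ in terms of $c_1(W)^2$, $\chi(W)$ and $\sigma(W)$ of any one filling $W$ of $\xi$; it does not bound $d_3$ over the set of all strongly fillable $\xi$ on $M$ unless you have uniform a priori bounds on the topology of minimal strong (or Stein) fillings of arbitrary contact structures on $M$. No such general bounds exist: the Stipsicz and Ozsv\'ath--Szab\'o type estimates you invoke apply only to restricted classes of 3-manifolds (e.g.\ certain rational homology spheres), and in general the topology of Stein fillings of a fixed contact 3-manifold is not controlled --- the paper's own Remark in Section 4 recalls contact 3-manifolds with infinitely many mutually non-homeomorphic Stein fillings (Ozbagci--Stipsicz, Smith), and there are families with Stein fillings of arbitrarily large Euler characteristic. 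So the argument as proposed is circular: you need finiteness of fillable homotopy classes to control the fillings, and control of the fillings to get finiteness. The statement you want in (i) --- only finitely many homotopy classes of plane fields on a closed 3-manifold carry tight contact structures --- is in fact true, but it is itself one of the main theorems of Colin--Giroux--Honda, proved by convex surface theory rather than by bounding fillings. Either cite that theorem directly, or better, collapse (i) and (ii) into the single citation of their Theorem 0.6 as the paper does.
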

\begin{proof}Theorem 0.6 in \cite{CGH} tells that for any given non-negative integer,
every closed connected 3-manifold has at most finitely many contactomorphism classes of tight
contact structures with Giroux torsion equal to that integer. The claim thus follows from Corollary 3 in \cite{Gay} which says that a contact structure with Giroux torsion $> 0$ is not strongly fillable. 
\end{proof}
Since a Stein filling is a strong filling, this lemma and the above corollary imply the following, which shows the main theorem.  
\begin{corollary}\label{cor:exotic fillings}$(1)$ There exists a contact structure $\xi$ on the boundary $\partial X$ such that at least infinitely many of $X_{2p-1}$'s $(p\geq 1)$ are Stein fillings of $(\partial X,\xi)$ and that these fillings are all homeomorphic but mutually non-diffeomorphic.\smallskip\\
$(2)$ There exists a contact structure $\eta$ on the boundary $\partial X$ such that at least infinitely many of $X_{2p}$'s $(p\geq 1)$ are Stein fillings of $(\partial X,\eta)$ and that these fillings are all homeomorphic but mutually non-diffeomorphic.
\end{corollary}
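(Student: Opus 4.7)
My plan is to combine the infinite family of pairwise exotic Stein manifolds produced by Corollary~\ref{cor:exotic} with the finiteness statement of Lemma~\ref{lem:Wendl} via a pigeonhole argument. First I would observe that the $p$-log transform in the construction is performed in the interior of $X$, so $\partial X_p = \partial X$ for every $p$; in particular, each Stein structure on $X_p$ induces a contact structure $\xi_p$ on the fixed 3-manifold $\partial X$, and each such $\xi_p$ is strongly fillable since a Stein filling is a strong symplectic filling.

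Next, for part (1), I would extract from Corollary~\ref{cor:exotic}(1) an infinite subfamily $\{X_{2p_k - 1}\}_{k \geq 1}$ whose members are pairwise non-diffeomorphic. This is done inductively: set $p_1 = 1$, and given $p_1 < \cdots < p_k$, choose $p_{k+1}$ to be any integer strictly greater than $\max_{1 \leq j \leq k} g_{2p_j - 1}$. Corollary~\ref{cor:exotic}(1) applied with each $p_j$ in the role of $p$ then guarantees that $X_{2p_{k+1} - 1}$ is not diffeomorphic to $X_{2p_j - 1}$ for any $j \leq k$, so by induction the whole family is pairwise non-diffeomorphic. By Lemma~\ref{homeo}, all members are mutually homeomorphic.

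Now comes the key step. By Lemma~\ref{lem:Wendl} the 3-manifold $\partial X$ carries only finitely many contactomorphism classes of strongly fillable contact structures, so the pigeonhole principle applied to the infinite sequence $\{\xi_{2p_k - 1}\}_{k \geq 1}$ produces an infinite sub-subfamily $\{\xi_{2p_{k_j} - 1}\}_{j \geq 1}$ of mutually contactomorphic contact structures. Setting $\xi := \xi_{2p_{k_1} - 1}$ and re-identifying the boundary of each $X_{2p_{k_j} - 1}$ with $\partial X$ via a contactomorphism $(\partial X, \xi_{2p_{k_j} - 1}) \to (\partial X, \xi)$ (which is in particular a diffeomorphism and therefore preserves the smooth diffeomorphism type of the underlying 4-manifold) exhibits these $X_{2p_{k_j} - 1}$ as infinitely many pairwise non-diffeomorphic, mutually homeomorphic Stein fillings of the single contact manifold $(\partial X, \xi)$. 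Part (2) is proved by the identical argument using Corollary~\ref{cor:exotic}(2) and the even family, yielding the desired $\eta$.

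I do not expect a real obstacle here, since the substantive analytic and topological work is entirely encapsulated in Corollary~\ref{cor:exotic} and Lemma~\ref{lem:Wendl}; the only mild subtlety is recognizing that re-gluing a Stein filling along a contactomorphism of its boundary does not change the smooth diffeomorphism type of the filling, so that the pigeonhole output indeed consists of bona fide Stein fillings of a common $(\partial X, \xi)$.
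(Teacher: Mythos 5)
Your proposal is correct and follows the paper's own route exactly: the paper derives this corollary by combining Corollary~\ref{cor:exotic} with Lemma~\ref{lem:Wendl} via precisely the pigeonhole argument you describe (a Stein filling being a strong filling, only finitely many contactomorphism classes of boundary contact structures can occur, so infinitely many of the pairwise non-diffeomorphic $X_{2p-1}$'s must fill the same one). Your inductive extraction of an infinite pairwise non-diffeomorphic subfamily is a detail the paper leaves implicit, and you supply it correctly.
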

\begin{remark}Corollary~\ref{cor:exotic fillings}.(1) and (2) give spin and non-spin examples, respectively.
\end{remark}
\section{Alternative proof}
In this section, we give an alternative proof of Theorem~\ref{sec:intro:thm:stein}. Unlike the above proof, we directly prove that the boundary contact structures of exotic Stein 4-manifolds are isomorphic to each other without applying Lemma~\ref{lem:Wendl}, though we use Corollary~\ref{cor:exotic} to detect smooth structures. Beware that we use the symbols $\alpha, \beta, \gamma, \xi, \eta$ in the previous sections for different meanings.\medskip

Consider the 3-torus $T^3=T^2\times \partial D^2$. We regard $H_1(T^3;\mathbb{Z})$ as $H_1(S^1;\mathbb{Z})\oplus H_1(S^1;\mathbb{Z}) \oplus H_1(\partial D^2;\mathbb{Z})\cong \mathbb{Z}\oplus \mathbb{Z}\oplus \mathbb{Z}$. Recall that the isotopy classes of self-diffeomorphisms of $T^3$ are determined by their induced automorphisms of $H_1(T^3;\mathbb{Z})$. For a positive integer $p$, let $f_p:T^2 \times \partial D^2\to T^2 \times \partial D^2$ be a diffeomorphism which induces the automorphism of $H_1(T^3;\mathbb{Z})$ represented by the matrix $\left(
\begin{array}{ccc}
1 & 0 & 0 \\
0 & 1 & 0 \\
0 & p & 1
\end{array}
\right)
$. Two handlebody descriptions of $f_p$ are described in Figures~\ref{fig3} and \ref{fig3.5}, where $\alpha, \beta, \gamma$ denote circles in $T^3$.  These pictures were essentially given in \cite{AY1} and \cite{A5}, respectively. 

\begin{figure}[ht!]
\begin{center}
\includegraphics[width=4.3in]{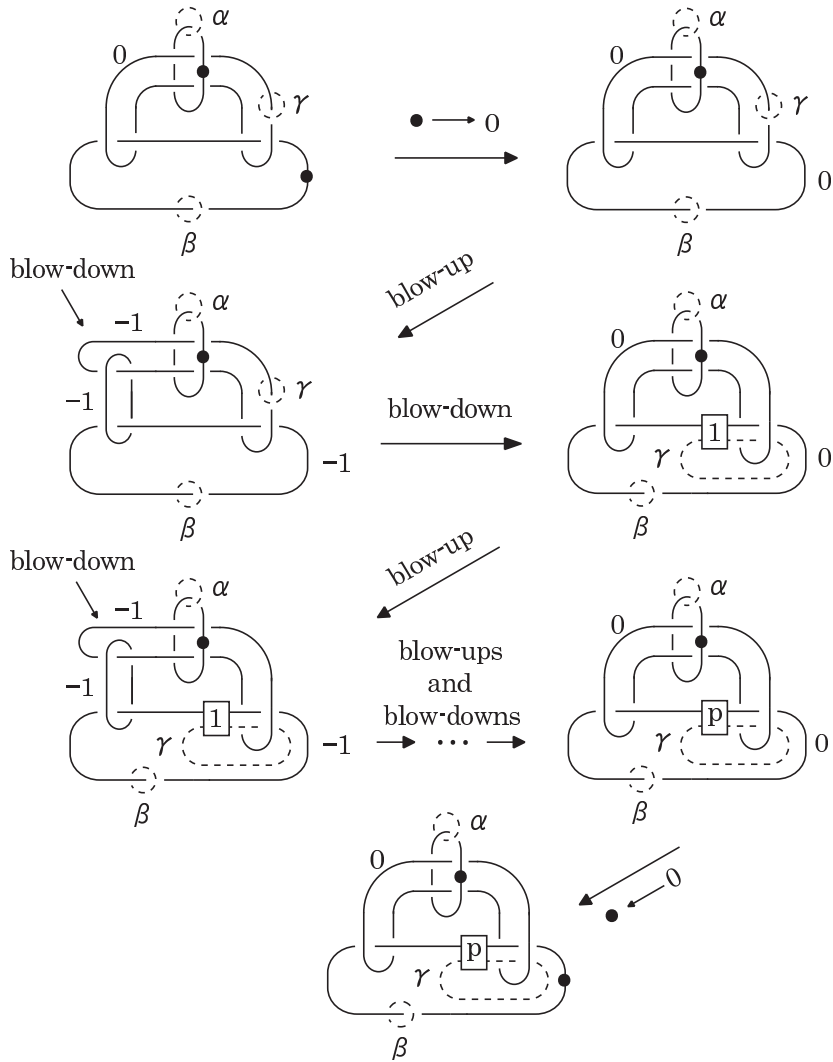}
\caption{A self-diffeomorphism $f_p$ of $T^2\times \partial D^2$}
\label{fig3}
\end{center}
\end{figure}

\begin{figure}[ht!]
\begin{center}
\includegraphics[width=4.5in]{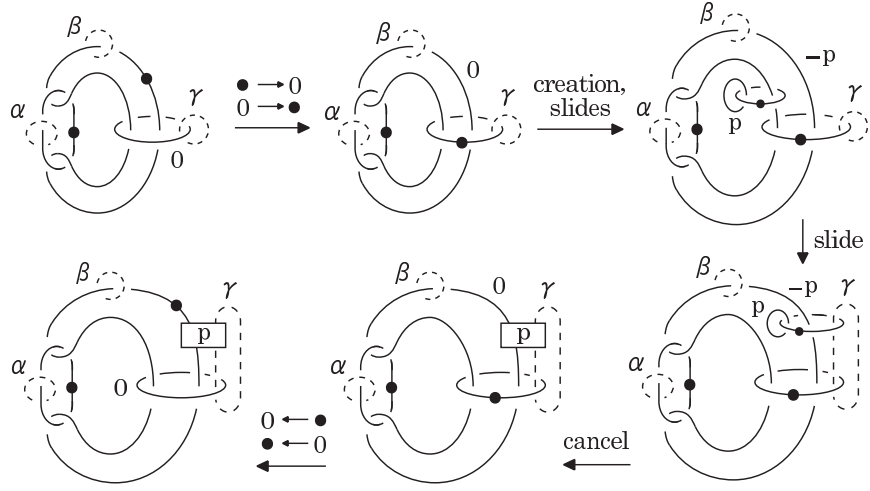}
\caption{A self-diffeomorphism $f_p$ of $T^2\times \partial D^2$}
\label{fig3.5}
\end{center}
\end{figure}

Now let $Y$ be the smooth 4-manifold given by the left handlebody in Figure~\ref{fig4}. Note that $Y$ contains an obvious $T^2\times D^2$. By removing the obvious $T^2\times D^2$ from $Y$ and regluing it via the diffeomorphism $f_p$, we get a 4-manifold $Y_p$, which is the right handlebody in the figure. Note that $Y_p$ is diffeomorphic to the $p$-log transform $X_p$ of $X$, since their handlebody diagrams are the same.\medskip

\begin{figure}[ht!]
\begin{center}
\includegraphics[width=3.5in]{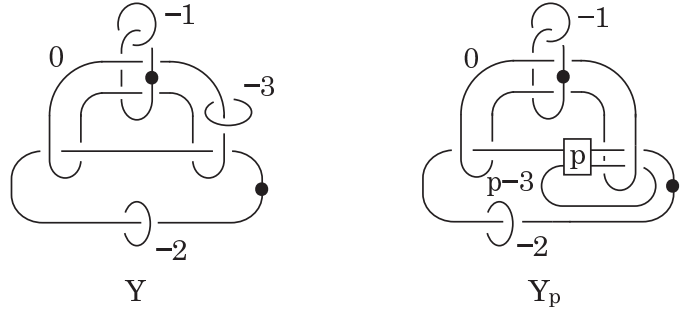}
\caption{$Y$ and $Y_{p}$}
\label{fig4}
\end{center}
\end{figure}

In the rest, we equip $Y$ and $Y_p$'s with Stein structures. Converting the 1-handle notation of $Y$, we get Figure~\ref{fig5}. By twisting the $0$-framed knot around the lower left 3-ball and putting the framed link into a Legendrian position, we get the handlebody of $Y$ in Figure~\ref{fig6}. Notice that the two 1-handles and the 0-framed Legendrian knot constitute a Stein handlebody decomposition of $T^2\times D^2$. Here recall that $T^2\times \partial D^2$ admits a unique Stein fillable contact structure $\xi$ up to  isomorphism (\cite{E3}). The rest of the three framed knots in the figure clearly constitute a Legendrian link (call it $L$) in $(T^3,\xi)$ such that each framing is one less than the Thurston-Bennequin number. Eliashberg's theorem~\cite{E1} thus tells that $Y$ admits a Stein structure. 

\begin{figure}[ht!]
\begin{center}
\includegraphics[width=1.7in]{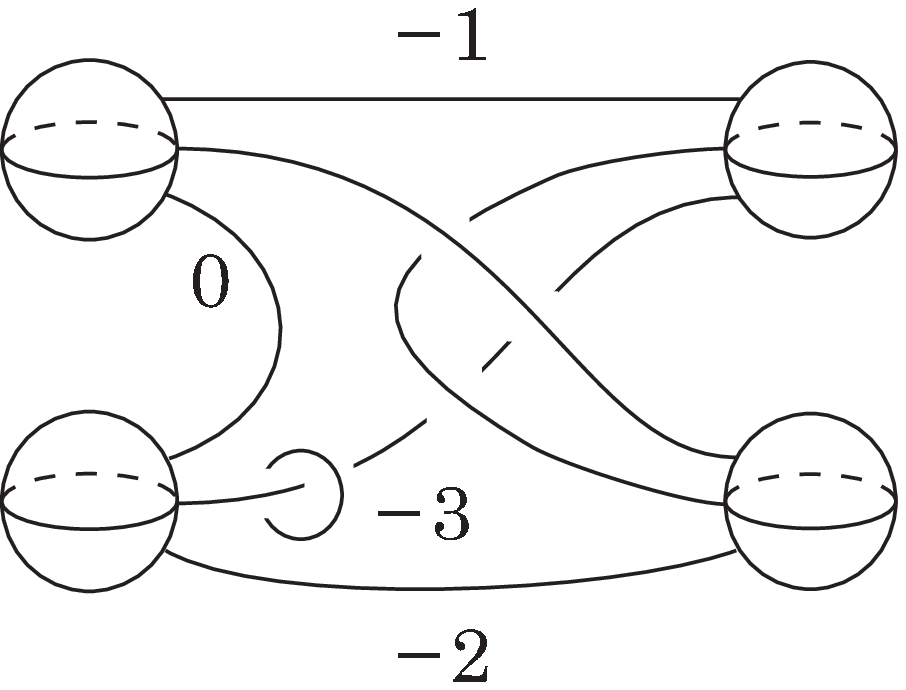}
\caption{$Y$}
\label{fig5}
\end{center}
\end{figure}

\begin{figure}[ht!]
\begin{center}
\includegraphics[width=2.1in]{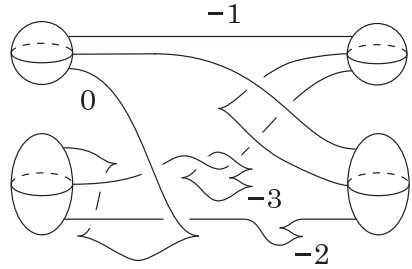}
\caption{A Stein handlebody decomposition of $Y$}
\label{fig6}
\end{center}
\end{figure}

Eliashberg-Polterovich~\cite{ElP} proved that if a diffeomorphism $g: T^3\to T^3$ stabilizes $\mathbb{Z}\oplus \mathbb{Z}\oplus 0\subset H_1(T^3;\mathbb{Z})$, then $g$ is isotopic to a self contactomorphism of $(T^3,\xi)$. We may thus regard the above $f_p$ as a self contactomorphism of $(T^3,\xi)$. Hence, $Y_p$ is obtained from $T^2\times D^2$ by attaching 2-handles along the Legendrian link $f_p(L)$ in $(T^3,\xi)$ with the contact $-1$ framings. Thus $Y_p$ admits a Stein structure. \medskip

The induced contact structure $\eta$ (resp.\ $\eta_p$) on $Y$ (resp.\ $\partial Y_p$) is obtained from $(T^3,\xi)$ by the contact $-1$ Dehn surgery along $L$ (resp.\ $f_p(L)$) (\cite{DG1}, cf.\ \cite{DG2}). Since $f_p$ is a contactomorphism, each $(\partial Y_p, \eta_p)$ is contactomorphic to $(\partial Y, \eta)$. Therefore, this fact together with Corollary~\ref{cor:exotic} gives the following, which shows Theorem~\ref{sec:intro:thm:stein}.
\begin{proposition}Every $4$-manifold $Y_p$ $(p\geq 1)$ is a simply connected Stein filling with $b_2=2$ of the same contact $3$-manifold $(\partial Y, \eta)$, which is a homology 3-sphere. Furthermore, the following hold.\medskip\\
$(1)$ $Y_{2p}$'s $(p\geq 1)$ are mutually homeomorphic and have odd intersection forms. Moreover, at least infinitely many of $Y_{2p}$'s are mutually non-diffeomorphic.\medskip\\
$(2)$ $Y_{2p-1}$'s $(p\geq 1)$ are mutually homeomorphic and have even intersection forms. Moreover, at least infinitely many of $Y_{2p-1}$'s are mutually non-diffeomorphic.
\end{proposition}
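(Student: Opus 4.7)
The plan is to assemble the statement from ingredients already developed earlier in this section. First I would record the topological data of $Y_p$. Since the right handlebody picture in Figure~\ref{fig4} realizing $Y_p$ is literally the handlebody picture of $X_p$ from Section~2, there is a canonical diffeomorphism $Y_p\cong X_p$. Consequently $Y_p$ is simply connected with $b_2=2$; its intersection form is $\bigl(\begin{smallmatrix}0 & 1 \\ 1 & -2p^2+p-3\end{smallmatrix}\bigr)$, which is even when $p$ is odd and odd when $p$ is even; and $\partial Y_p=\partial X_p$ is a homology $3$-sphere because the form is unimodular.

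Next I would address the Stein filling claim. The paragraph preceding the proposition already constructs a Stein structure on $Y$ and on each $Y_p$ via Eliashberg's theorem applied to the Legendrian data in Figure~\ref{fig6} (and to its image under $f_p$), and invokes the Eliashberg--Polterovich result to promote $f_p$ to a self contactomorphism of $(T^3,\xi)$. I would simply quote these facts: they tell us that $\eta_p$ is obtained from $(T^3,\xi)$ by contact $-1$ surgery along $f_p(L)$ while $\eta$ is obtained by the same surgery along $L$, so $f_p$ extends to a contactomorphism $(\partial Y_p,\eta_p)\to(\partial Y,\eta)$. Hence each $Y_p$ is a Stein filling of the one fixed contact manifold $(\partial Y,\eta)$.

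Finally I would dispose of parts (1) and (2) by transporting the earlier results across $Y_p\cong X_p$. The parity split of the intersection form noted above, combined with Freedman's theorem and the classification of simply connected unimodular indefinite rank-$2$ forms over $\mathbb{Z}$ used in Lemma~\ref{homeo}, shows that $Y_p$ and $Y_q$ are mutually homeomorphic whenever the parities of $p$ and $q$ agree. For the diffeomorphism statement, I would simply apply Corollary~\ref{cor:exotic}: for every fixed $p$, all sufficiently large $q$ of the same parity yield $X_q$ not diffeomorphic to $X_p$, producing an infinite subfamily of mutually non-diffeomorphic $Y_p$'s in each parity class.

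There is no hard technical step in the argument itself, since the essential content has been executed in Section~3 and in the paragraphs preceding the proposition. The only point that needs mild care is to verify that the diffeomorphism $Y_p\cong X_p$ is compatible with the basis of $H_2$ used in the genus arguments underlying Corollary~\ref{cor:exotic}, so that the adjunction obstruction transports intact; because the two handlebody diagrams literally agree, this matching is immediate, and I expect this to be the only bookkeeping worth writing out explicitly.
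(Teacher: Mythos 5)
Your proposal is correct and follows essentially the same route as the paper: identify $Y_p$ with $X_p$ via the identical handlebody diagrams, quote the Eliashberg--Polterovich contactomorphism argument from the preceding paragraphs to see that all the $(\partial Y_p,\eta_p)$ are contactomorphic to $(\partial Y,\eta)$, and import the homeomorphism and non-diffeomorphism statements from Lemma~\ref{homeo} and Corollary~\ref{cor:exotic}. The only superfluous point is your worry about basis compatibility: since Corollary~\ref{cor:exotic} is a statement about (non-)diffeomorphism of the $X_p$'s, it transports along any diffeomorphism $Y_p\cong X_p$ with no bookkeeping required.
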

 
\begin{remark}$(1)$ The constructions in this paper clearly have many variations. For example, we can change link types and framings of the attaching circles of 2-handles of $X$ attached to $T^2\times D^2$. A more general construction in $b_2\geq 2$ case and the non-simply connected case will be discussed in \cite{Y6} applying techniques in \cite{Y5}.\medskip\\
$(2)$ Modifying the above construction, we can easily construct many contact 3-manifolds such that each of them has infinitely many mutually non-homeomorphic Stein fillings. The first such examples were given by Ozbagci-Stipsicz~\cite{OS2} and Smith~\cite{Sm}. 

Our construction is as follows. Consider a Legendrian knot $\widetilde{\gamma}$ in $(T^2 \times \partial D^2, \xi)$ that is smoothly isotopic to $\gamma$ in the first picture of Figure~\ref{fig3}. Let $V$ be the Stein 4-manifold obtained by attaching a 2-handle to $T^2 \times D^2$ along $\widetilde{\gamma}$ with the contact $-1$ framing. Remove the obvious $T^2\times D^2$ from $V$ and reglue it via the diffeomorphism $f_p$. Then, the resulting Stein manifold $V_p$ and $V$ induce the same contact structure up to isomorphism. On the other hand, we see $H_1(V_p;\mathbb{Z})\cong \mathbb{Z}\oplus (\mathbb{Z}/p\mathbb{Z})$. Therefore, $V_p$'s $(p\geq 1)$ are infinitely many mutually non-homeomorphic Stein fillings of the same contact 3-manifold. Note that $V$ is a plumbing of the disk bundle over a torus with Euler number zero and the disk bundle over a 2-sphere with Euler number $\leq -2$. There are clearly many variations. 
\medskip\\
$(3)$ We can construct arbitrarily many mutually non-isomorphic Stein fillable contact structures on a homology 3-sphere such that each of them has infinitely many simply connected exotic Stein fillings with $b_2=2$. The construction is roughly as follows. Change framings of the attaching circles $L$ of 2-handles of $Y$ attached to $T^2\times D^2$ into large negative numbers. Since there are many Legendrian realizations of the link $L$ in $(T^3,\xi)$, we get finitely many non-isomorphic Stein fillable contact structures on $\partial Y$, which can be detected by their $d_3$ invariants. Similarly to the proof in this section, we obtain infinitely many exotic Stein fillings for each of these contact structures. 
\end{remark}



\end{document}